\documentclass[12pt,reqno]{amsart}
\usepackage{amsmath,amsthm,amssymb,amsfonts,amscd}
\usepackage{mathrsfs}
\usepackage{bbm}
\usepackage{bbding}
\usepackage[backref=page]{hyperref}
\usepackage{geometry}\geometry{margin=1in}
\usepackage{color}
\usepackage{xcolor}

\usepackage{picture,epic}
\usepackage{tikz}

\usepackage{etoolbox}
\usepackage{orcidlink}




\newif\ifrevisionmode
\revisionmodetrue

\numberwithin{equation}{section}

\setcounter{footnote}{0}

\theoremstyle{plain}
\newtheorem{theorem}{Theorem}[section]
\newtheorem{lemma}[theorem]{Lemma}
\newtheorem{corollary}[theorem]{Corollary}
\newtheorem{proposition}[theorem]{Proposition}

\theoremstyle{definition}
\newtheorem{conjecture}[theorem]{Conjecture}

\theoremstyle{remark}
\newtheorem{remark}[theorem]{Remark}

\renewcommand{\Re}{\operatorname{Re}}
\renewcommand{\Im}{\operatorname{Im}}

\newcommand{\sym}{\operatorname{sym}}

\newcommand{\GL}{\operatorname{GL}}
\newcommand{\SL}{\operatorname{SL}}

\newcommand{\dd}{\mathrm{d}}


\makeatletter
\def\@tocline#1#2#3#4#5#6#7{\relax
  \ifnum #1>\c@tocdepth 
  \else
    \par \addpenalty\@secpenalty\addvspace{#2}%
    \begingroup \hyphenpenalty\@M
    \@ifempty{#4}{%
      \@tempdima\csname r@tocindent\number#1\endcsname\relax
    }{%
      \@tempdima#4\relax
    }%
    \parindent\z@ \leftskip#3\relax \advance\leftskip\@tempdima\relax
    \rightskip\@pnumwidth plus4em \parfillskip-\@pnumwidth
    #5\leavevmode\hskip-\@tempdima
      \ifcase #1
       \or\or \hskip 1em \or \hskip 2em \else \hskip 3em \fi%
      #6\nobreak\relax
    \hfill\hbox to\@pnumwidth{\@tocpagenum{#7}}\par
    \nobreak
    \endgroup
  \fi}
\makeatother

\begin{document}

\title{Quadratic Forms of Modular Forms}
\author{Shenghao Hua~\orcidlink{0000-0002-7210-2650}}
\address[1]{Shanghai Institute for Mathematics and Interdisciplinary Sciences (SIMIS), Shanghai, 200433, China}
\address[2]{Research Institute of Intelligent Complex Systems, Fudan University, Shanghai, 200433, China}
\email{huashenghao@vip.qq.com}


\begin{abstract}
In this paper, we study quadratic forms in spaces of holomorphic cusp forms.
We show, conditionally, that when two quadratic forms in Hecke eigenforms share no common diagonal terms, their inner product is expected to converge to the sum of the products of their common off-diagonal coefficients.
This phenomenon could be interpreted as a mixed $L^4$-norm problem.

We also define the $\ell^p$-norm of a holomorphic cusp form via its expansion with respect to an orthonormal Hecke basis.
We then establish a conditional upper bound for the $\ell^p$-norm, and deduce that the coefficients of quadratic forms of holomorphic cusp forms in the Hecke basis are not uniformly small, being dominated by small-amplitude components. This behavior is consistent with the expected distribution of orthogonal families of $L$-functions.

\end{abstract}

\keywords{Modular forms, quadratic forms, decorrelation, $L$-functions}

\subjclass[2020]{11F12, 11F30, 11F66}

\maketitle

\section{Introduction} \label{sec:Intr}

Modular forms originated from the theory of elliptic functions in the 19th century and have since developed into a bridge connecting number theory, algebraic geometry, and representation theory. They play a central role in modern mathematics, profoundly driving the resolution of many major theories and conjectures.
In particular, modular forms form graded algebras, where the product yields elements of different gradings in the graded rings. In the corresponding graded linear spaces, expansions can be made.
A very natural question is how to describe the corresponding properties.
Let \(\mathbb{H}\) denote the upper half-plane and \(\Gamma = \mathrm{SL}_2(\mathbb{Z})\) the full modular group.

For \(k \geq 12\), the Petersson inner product on \(S_k\) is defined for \(h_1, h_2 \in S_k\) by
\[
\langle h_1, h_2 \rangle := \int_{\Gamma \backslash \mathbb{H}} y^k h_1(z) \overline{h_2(z)} \, d\mu(z),
\]
where the hyperbolic measure \(d\mu(z)\) is given by
\[
d\mu(z) = \frac{dx\, dy}{y^2}.
\]

The $L^4$-norm problem, as conjectured in~\cite[Conjecture 1.2]{BlomerKhanYoung2013}, where it is expected that the \( L^4 \)-norm of \( f \) is asymptotically 2.
Blomer, Khan, and Young~\cite{BlomerKhanYoung2013} proved the upper bound
\[
  \int_{\Gamma \backslash \mathbb{H}} y^{2k_0} |f(z)|^4 \, \dd\mu(z)
  = O(k_0^{1/3+\varepsilon}).
\]
Assuming the GRH, Zenz~\cite{Zenz2023} improved this to
\[
  \int_{\Gamma \backslash \mathbb{H}} y^{2k_0} |f(z)|^4 \, \dd\mu(z) = O(1).
\]
Motivated by this and the joint distribution of automorphic forms (see \cite{HuaHuangLi2024,Huang2024}), we make the following conjecture for the inner products of quadratic forms in Hecke eigenforms.
\begin{conjecture}\label{conj:innerprod}
For \( L^2\)-normalized Hecke eigenforms $f_1,f_2,f_3,f_4$ of even weights, i.e. $\langle f_i, f_i\rangle=1$, satisfying $k_1+k_2=k_3+k_4$, we have
\begin{equation}\label{eqn:conj}
\langle f_1 f_2,\, f_3 f_4 \rangle
=
\delta_{\{f_1,f_2\}=\{f_3,f_4\}}
\bigl(1+\,\delta_{f_1=f_2}\bigr)
+ o(1),
\end{equation}
as $k_1+k_2 \to \infty$.
\end{conjecture}

In particular, when all four forms coincide, this reduces to the classical $L^4$-norm problem.
For other cases, Huang~\cite{Huang2024} proved the conjecture when $f_1=f_2\neq f_3=f_4$, as well as when
$\{f_1,f_2\}=\{f_3,f_4\}\neq \{f_1\}$.
Here we complete the proof in all remaining cases in which the four forms are not all identical.

\begin{theorem}\label{thm:Heckejoint}
Let $\#\{f_1,f_2,f_3,f_4\}\ge 2$.
Assume GRH.
We make the following assumptions:
\begin{enumerate}
    \item[(i)] Assume the Generalized Ramanujan Conjecture when $\{f_1,f_2\} = \{f_3,f_4\} \neq \{f_1\}$.

    \item[(ii)] Assume $f_1 \times f_2 \nsim f_3 \times f_4$ when $\#\{f_1,f_2,f_3,f_4\} = 4$.

    \item[(iii)] Assume the analytic continuation of triple product $L$-functions involving symmetric squares in the following cases:
    \[
    \#\{f_1,f_2\} = 2 \quad\text{or}\quad \#\{f_3,f_4\} = 2,
    \]
    and $\{f_1,f_2\} \neq \{f_3,f_4\}$.
\end{enumerate}
Under these assumptions, Conjecture~\ref{conj:innerprod} holds.
\end{theorem}

An interesting phenomenon in \cite{Huang2024} is that the case of $f_1=f_2$ and $f_3=f_4$ is different to the case of $f_1=f_3$ and $f_2=f_4$. The reason is the product of modular forms is another modular form.

Let \(k_1, k_2 \geq 12\) be even integers.
For each \(i = 1, 2\), denote by \(S_{k_i}\) the space of holomorphic cusp forms of weight \(k_i\) on the modular surface \(\Gamma \backslash \mathbb{H}\).
For \(f \in S_{k_1}\) and \(g \in S_{k_2}\), we know that the product \(fg\) is a modular form of weight \(k_1 + k_2\). Moreover, due to the vanishing condition at the cusp, \(fg\) is itself a cusp form.
Since the first Fourier coefficient of any Hecke eigenform $h$ is non-zero,
and the first Fourier coefficient of the product vanishes, we know it can not be Hecke eigenform.
We have the decomposition
\[
fg = \sum_{h \in H_{k_1 + k_2}} \langle fg, h \rangle \, h,
\]
where \(H_{k_1 + k_2}\) is a \( L^2\)-normalized Hecke basis of \(S_{k_1 + k_2}\).
Then we have \(\sum_{h \in H_{k_1 + k_2}} \langle fg, h \rangle = 0 \).

When we expand modular forms in the Hecke basis, we obtain a bijection between the sequence of coefficients and the modular form. Consequently, we can define the $\ell^p$-norm as the sum of the $p$-th powers of the absolute values of the coordinate components.
In the case of the quadratic forms in modular cusp forms, we show a conditional upper bound for the $\ell^p$-norm, and deduce the coefficients in the Hecke
basis exhibit a lack of uniformly bounded distribution.

\begin{remark}
If one removes $\varepsilon$ via Harper's method~\cite{Harper2013}, a $\log\log$ power from \eqref{eqn:sumofcoefficients2a} persists. This is because the spectral parameters of the summation depend on those of the base forms, a dependence absent in the fixed-form case.
\end{remark}

Let \( N \in \mathbb{Z}^+ \).
Suppose we are given complex coefficients \( a_{i,j} \) for \( 1 \leq i, j \leq N \), satisfying the symmetry condition \( a_{i,j} = a_{j,i} \).
Let \( k_i \geq 12 \) be even positive integers such that there exists \( k \) with
\[
k_i + k_j = k \quad \text{whenever } a_{i,j} \neq 0.
\]
We then define the quadratic form
\[
Q(x_1, \dots, x_N) = \sum_{i,j=1}^{N} a_{i,j} x_i x_j.
\]

Let \( H_{k_i} \) be a $L^2$-normalized Hecke eigenbasis of the cusp form space \( S_{k_i} \).
For each \( 1 \leq i \leq N \), assuming \( f_i \in H_{k_i} \),
then \( Q(f_1, \dots, f_N) \) is a cusp form of weight \( k \).
For convenience, we set
$
\sum_{i,j}
\left|a_{i,j} \right|
\leq B
$
for some constant \( B > 0 \).
Using Minkowski's inequality $\|x+y\|_p\leq \|x\|_p+\|y\|_p$,
the \(\ell^p\)-norm estimate can be reduced to the case of a sum
of terms of the form \(f^2\) and \(fg\).
For such quadratic form, we have the following result.

\begin{theorem}\label{thm:lpnormformodular}
Assuming the GRH for certain \( L \)-functions, and the analytic continuation of triple product \( L \)-functions involving symmetric squares; precise statements will be given in Theorem~\ref{thm:hecke}.
  Let $\varepsilon>0$. Let \(k \to \infty\), for any \(p>0\), we have   \begin{multline}\label{eqn:upperforQ}
    \|Q(f_1, \dots, f_N)\|_{\ell^p,H_k}
    :=
    \left(\sum_{h\in H_{k}}
    |\langle Q(f_1, \dots, f_N), h \rangle|^{p}\right)^{1/p}
   \\ \ll_{N,B,\varepsilon}
   k^{\frac{1}{p}-\frac{1}{2}}
    \left(\log^{-\frac{2-p}{4}+(p+1)\varepsilon} k
    +\log^{-\frac{2-p}{8}+(p+1)\varepsilon} k \right).
  \end{multline}
Moreover, if we always have $a_{i,i}= 0$, then we have
\begin{equation}\label{eqn:upperforQpure}
    \|Q(f_1, \dots, f_N)\|_{\ell^p,H_k}
\ll_{N,B,\varepsilon}
   k^{\frac{1}{p}-\frac{1}{2}}
\log^{-\frac{2-p}{8}+(p+1)\varepsilon} k,
\end{equation}
  and for any $p>2$, we have
\[
  \|Q(f_1, \dots, f_N)\|_{\ell^p,H_k}=o(1).
\]
\end{theorem}

%

\begin{remark}
  The assumption of analytic continuation is unnecessary in the case of diagonal quadratic forms $Q$.
\end{remark}

In order to deduce a lower bound for the \(\ell^0\)-norm from an upper bound on the \(\ell^p\)-norm,
we set the coefficients to satisfy
$$a:=\sum_{i,j}\frac{a_{i,j}}{L(1,\sym^2 f_i)L(1,\sym^2 f_j)}\neq 0.$$
Then we have the following result.

\begin{theorem}\label{thm:nonsolution}
Assuming the GRH for certain \( L \)-functions, and the analytic continuation of triple product \( L \)-functions involving symmetric squares; precise statements will be given in Theorem~\ref{thm:hecke}.
For any \(L \in \mathbb{Z}^+ \), any $\varepsilon>0$, there exists a constant $ K=K_0L^{2+\varepsilon}$,
$K_0$ depends on $a$ and $N$, such that for all \( k > K \), there is no solution
\[
(c_1, \dots, c_{\dim S_k})
\]
with at most \( L \) nonzero coordinates satisfying
\[
\sum_{\phi_{k,r} \in H_k} c_r \phi_{k,r} = Q(f_1, \dots, f_N).
\]
\end{theorem}

\begin{remark}
  The assumption of analytic continuation is unnecessary in the case of diagonal quadratic forms $Q$.
\end{remark}

When Eisenstein series are included, Duke~\cite{Duke1999} and Ghate~\cite{Ghate2000} proved that the product of two Hecke eigenforms for the full modular group is itself a Hecke eigenform in only 16 cases. Beyerl, James, and Xue~\cite{BeyerlJamesXue2014} considered the Rankin--Cohen bracket, while Joshi and Zhang~\cite{JoshiZhang2019} investigated the case of Hilbert modular forms. Bao~\cite{Bao2019} extended the result to certain binary quadratic forms in holomorphic cusp forms.

The following form comes from Theorem~\ref{thm:Heckejoint}.
\begin{corollary}\label{cor:asy}
For two quadratic forms
\[
Q(x_1,\dots,x_N) = \sum_{i,j=1}^{N} a_{i,j} x_i x_j,\qquad
Q'(x_1,\dots,x_N) = \sum_{i,j=1}^{N} a_{i,j}' x_i x_j,
\]
assume that they have no common diagonal terms, i.e., $a_{i,i}
a_{i,i}' = 0$ for all $i$.
Let $k$ large enough, and $(f_1,\dots,f_N) \in \prod_{i=1}^{N}H_{k_i}$ with $k_i+k_j=k$ if some $a_{i,j}\neq 0$ or $a_{i,j}'\neq 0$.
For those $(f_i,f_j,f_k,f_l)$ with $a_{i,j}a_{k,l}' \neq 0$, we impose the same conditions as in
Theorem~\ref{thm:Heckejoint}, and we have
\[
\langle Q(x_1,\dots,x_N),\,Q' (f_1,\dots,f_N)\rangle = \sum_{i \neq j} a_{i,j}a_{i,j}' + o(1).
\]
\end{corollary}

Here we show how Theorem~\ref{thm:lpnormformodular} implies Theorem~\ref{thm:nonsolution}.

\begin{proof}[Proof of Theorem~\ref{thm:nonsolution} assuming Theorem~\ref{thm:lpnormformodular}]
Recall that the first Fourier coefficient of every $L^2$-normalized Hecke eigenform $\phi_r$ is $\frac{1}{L(1,\sym^2 \phi_r)}$,
so the second Fourier coefficient of \( Q(f_1, \dots, f_N) \) equals $a$.

If a solution exists, then the second Fourier coefficient of \( Q(f_1, \dots, f_N) \) must also equal \( \sum_{r=1}^{\dim S_k} \frac{c_r \lambda_{\phi_r}(2)}{L(1,\sym^2 \phi_r)} \), where \( \lambda_{\phi_r}(2) \) denotes the second Fourier coefficient of \( \phi_r \).
Recall Deligne's bound, hence there exists some \( r_0 \) such that
$$|c_{r_0}| \geq \frac{|a|L(1,\sym^2 \phi_{r_0})}{2L},$$
and consequently, for any $\varepsilon>0$ and any $p>0$,
\[
\|Q(f_1, \dots, f_N)\|_{\ell^p,H_k} \gg \frac{|a|}{L\log^{\varepsilon} k}.
\]
This contradicts Theorem~\ref{thm:lpnormformodular}.
\end{proof}

We reduce the proof of Theorem~\ref{thm:lpnormformodular} to the case of Hecke eigenforms, establishing the decay of the $\ell^p$-norm through the study of mixed moments of $L$-functions.

\begin{theorem}\label{thm:hecke}
Let \(p>0\).
Let \(k_1, k_2 \geq 12\) be even integers.
Let \( f \in H_{k_1} \) and \( g \in H_{k_2} \), where \(H_{k}\) is a \( L^2\)-normalized Hecke basis of \(S_{k}\).
Assuming the analytic continuation of $L(s, \sym^2 f \times \sym^2 g\times \sym^2 h)$ and the GRH for
\[
\begin{aligned}
&L(s, h), \quad
L(s, f \times g \times h), \quad
L(s, \sym^2 f), \\
&L(s, \sym^2 g), \quad
L(s, \sym^2 h), \quad
L(s, \sym^2 f \times \sym^2 g), \\
&L(s, \sym^2 f \times \sym^2 h), \quad
L(s, \sym^2 g\times \sym^2 h), \quad
L(s, \sym^2 f \times \sym^2 g\times \sym^2 h),
\end{aligned}
\]
for all \(h \in H_{k_1 + k_2}\), any $\varepsilon>0$,
as \(\max\{k_1, k_2\} \to \infty\), we have
\begin{equation}\label{eqn:tripleinner}
  \left(\sum_{h \in H_{k_1 + k_2}} |\langle f g, h \rangle|^{p}\right)^{1/p} \ll_{\varepsilon}
  (k_1+k_2)^{\frac{1}{p}-\frac{1}{2}}
  \log^{-\frac{2-p}{8}+(p+1)\varepsilon}(k_1 + k_2).
\end{equation}

Assuming the GRH for \[
L(s, h), \quad
L(\sym^2 f \times h), \quad
L(\sym^4 f \times h), \quad
L(s, \sym^2 f), \quad
L(s, \sym^4 f),
\]
for all \(h \in H_{2 k_1}\), any $\varepsilon>0$,
as \(k_1 \to \infty\), we have
\begin{equation}\label{eqn:mixinner}
 \left(\sum_{h \in H_{2 k_1}} |\langle f^2, h \rangle|^{p}\right)^{1/p} \ll_{\varepsilon}
k_1^{\frac{1}{p}-\frac{1}{2}}
 \log^{-\frac{2-p}{4}+(p+1)\varepsilon} k_1.
\end{equation}
\end{theorem}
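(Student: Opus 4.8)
The plan is to realize $|\langle fg,h\rangle|^2$ in terms of a central value of a triple product $L$-function via the Watson–Ichino formula, and then to control the $\ell^p$-norm by a mixed-moment estimate over $h\in H_{k_1+k_2}$ à la Soundararajan. Concretely, for $h\in H_{k_1+k_2}$, Watson's formula gives
\[
\frac{|\langle fg,h\rangle|^2}{\langle f,f\rangle\langle g,g\rangle\langle h,h\rangle}
= C(k_1,k_2)\,\frac{L(\tfrac12, f\times g\times h)}{L(1,\sym^2 f)\,L(1,\sym^2 g)\,L(1,\sym^2 h)},
\]
where $C(k_1,k_2)$ is an explicit archimedean/combinatorial factor of polynomial size in $k_1,k_2$. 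Taking $\langle h,h\rangle$-normalized eigenforms and using the dictionary between Petersson norms and symmetric-square $L$-values at $1$, one reduces \eqref{eqn:tripleinner} to bounding
\[
\sum_{h\in H_{k_1+k_2}} \left(\frac{L(\tfrac12,f\times g\times h)}{L(1,\sym^2 f)L(1,\sym^2 g)L(1,\sym^2 h)}\right)^{p/2}
\]
times harmless polynomial factors, and showing this is $\ll (\log k)^{-(2-p)/4+\varepsilon}\cdot(\dim S_k)^{?}$ in the right normalization. The key point is that, under GRH, $\log L(\tfrac12,f\times g\times h)$ behaves like a sum of independent-ish random variables and, crucially, is \emph{typically negative and large} — the central value is typically much smaller than its mean — so raising to the power $p/2<1$ and summing captures this saving.

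The main step is therefore a distributional / large-deviation estimate for $\log|L(\tfrac12,f\times g\times h)|$ as $h$ ranges over $H_{k_1+k_2}$, obtained by Soundararajan's resonance-free ``moments'' method \cite{Soundararajan2009}: assuming GRH one writes, for a parameter $x$,
\[
\log|L(\tfrac12,f\times g\times h)| \le \Re\sum_{n\le x}\frac{\Lambda(n)\lambda_{f\times g\times h}(n)}{n^{1/2+1/\log x}\log n} + O\!\left(\frac{\log k}{\log x}\right),
\]
and then estimates the moments of the Dirichlet polynomial on the right by opening it, applying the Hecke relations to express $\lambda_{f\times g\times h}(n)$ through the relevant $L$-functions, and invoking the (assumed) analytic continuation of $L(s,\sym^2 f\times\sym^2 g\times\sym^2 h)$ and GRH for all the auxiliary $L$-functions listed to show the off-diagonal and diagonal terms produce a Gaussian with variance $\sim \tfrac14\log\log k$ (the relevant mean of $\log L(1,\sym^2 h)$-type quantities contributing the shift). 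Integrating the resulting tail bound against $t\mapsto e^{pt/2}$ yields the factor $(\log k)^{-(2-p)/8+\varepsilon}$ after accounting for the count $\dim S_{k}\asymp k$ and the fact that a positive proportion of $h$ contribute at the extreme. For \eqref{eqn:mixinner} one repeats the argument with $fg$ replaced by $f^2$: here Watson's formula degenerates and the relevant factorization is
\[
L(\tfrac12,f\times f\times h) = L(\tfrac12,\sym^2 f\times h)\,L(\tfrac12,h),
\]
so one needs the joint distribution of $\log|L(\tfrac12,\sym^2 f\times h)|$ and $\log|L(\tfrac12,h)|$; the variance doubles relative to the generic case (the two factors being ``independent'' in the moment computation, each contributing $\tfrac14\log\log k$), which is exactly what upgrades the exponent from $(2-p)/8$ to $(2-p)/4$.

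The hardest part will be the moment computation on GRH with the correct \emph{variance constant} and a \emph{genuine power-saving error term}: one must show that the Dirichlet-polynomial moments match those of a Gaussian of variance precisely $\tfrac14\log\log k$ (resp. $\tfrac12\log\log k$), which forces careful bookkeeping of the Euler factors of all nine (resp. five) $L$-functions appearing, in particular verifying that the symmetric-square-twisted triple product $L(s,\sym^2 f\times\sym^2 g\times\sym^2 h)$ is the only ``new'' object whose analytic continuation is not already covered by GRH, and that its contribution to the relevant second moment is $O(1)$ rather than secular. A secondary technical obstacle is the uniformity: $f,g$ vary (their weights $k_1,k_2\to\infty$), so all implied constants in the Hecke-relation manipulations must be independent of $f,g$, which is where Deligne's bound and the polynomial control of the archimedean factor $C(k_1,k_2)$ — absorbed harmlessly since we only lose $(\log k)^{-(2-p)/8+\varepsilon}$ — are used. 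Finally, one must check the reduction from the normalized quantity back to $\langle fg,h\rangle$ itself does not reintroduce a large factor: this is where the $(\dim S_k)$-sized count and the $\langle f,f\rangle,\langle g,g\rangle$ normalizations cancel against each other in the $\ell^p$ sum, leaving only logarithmic losses.
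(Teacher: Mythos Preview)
Your overall strategy matches the paper's exactly: Watson's formula reduces the inner products to central $L$-values, and then Soundararajan's method (GRH inequality for $\log L$, moments of short Dirichlet polynomials via Petersson, Gaussian tail integration) yields the fractional moment bounds stated as Proposition~\ref{prop:Lbound}. So the architecture is right.

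However, several of your quantitative assertions are off and would derail the computation if taken literally. First, the variance of $\log L(\tfrac12,f\times g\times h)$ over $h\in H_{k_1+k_2}$ is $\sim\log\log k$, not $\tfrac14\log\log k$: the relevant coefficient sum is $\sum_{p\le x}\lambda_f(p)^2\lambda_g(p)^2/p$, and since $\lambda_f(p)^2\lambda_g(p)^2=(1+\lambda_{\sym^2 f}(p))(1+\lambda_{\sym^2 g}(p))$, the constant term $1$ gives $\log\log x$ while the remaining three terms are $O(\log\log\log k)$ by the GRH estimates of Lemma~\ref{lemma:sumofcoefficients}. Second, your explanation of why the exponent improves from $(2-p)/8$ to $(2-p)/4$ in the $f=g$ case is backwards. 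The factorization $L(\tfrac12,f\times f\times h)=L(\tfrac12,h)L(\tfrac12,\sym^2 f\times h)$ doubles \emph{both} the mean (to $-\log\log k$) and the variance (to $2\log\log k$); doubling the variance alone would make the bound worse, not better. Equivalently, the $l$-th moment of an (approximately) independent product is the product of $l$-th moments, so the exponent $\tfrac{l(l-1)}{2}$ becomes $l(l-1)$, which with $l=p/2$ is exactly the improvement. Third, the archimedean constant $C(k_1,k_2)$ in Watson's formula is not ``of polynomial size'' to be absorbed; after Stirling it contributes the crucial factor $\asymp 1/(k_1+k_2)$, which is what cancels $\#H_{k_1+k_2}\asymp k_1+k_2$ in the sum. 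If it were polynomially large the argument would fail outright, so your final paragraph's worry about ``reintroducing a large factor'' is aimed at the wrong place.
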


\begin{proof}[Proof of Theorem~\ref{thm:lpnormformodular} assuming Theorem~\ref{thm:hecke}]
By Minkowski's inequality, we have
\begin{equation*}
\|Q(f_1, \dots, f_N)\|_{\ell^p,H_k}
\\ \leq \sum_{i,j}|a_{i,j}|
\left( \sum_{h \in H_{k}} |\langle f_i f_j,h \rangle|^{p} \right)^{1/p}.
\end{equation*}
Then, applying Theorem~\ref{thm:hecke} yields the desired estimate \eqref{eqn:upperforQ}.
In particular, all $a_{i,i}=0$ implies that the contribution from \( |\langle f^2, h \rangle| \) vanishes, hence we obtain a better bound arising solely from \eqref{eqn:mixinner}, without any contribution from \eqref{eqn:tripleinner},
 which leads to \eqref{eqn:upperforQpure}.
\end{proof}

We prove Theorem~\ref{thm:hecke} by studying real moments of the following $L$-functions.
We prove this latter result in \S\ref{sec:moments} via Soundararajan's method~\cite{Soundararajan2009}.
A similar argument, carried out in \S\ref{sec:quadjoint}, yields Theorem~\ref{thm:Heckejoint}.

\begin{proposition}\label{prop:LboundSound}
  Under the assumptions of Theorem~\ref{thm:hecke}, including the analytic continuation and the GRH for the relevant \(L\)-functions.
  For $l,l_1,l_2> 0$, and any $\varepsilon>0$, we have that
  \begin{equation}\label{eqn:uppersymLSound}
   \frac{1}{k_1}
  \sum_{h \in H_{2k_1}}
  L\left(\tfrac{1}{2}, h\right)^{l_1} L\left(\tfrac{1}{2}, \sym^2 f \times h\right)^{l_2}
  \ll_{\varepsilon} \sqrt{l_1^2+l_2^2}\log ^{\frac{l_1(l_1 - 1)}{2} + \frac{l_2(l_2 - 1)}{2} + (l_1^2+l_2^2+1)\varepsilon}k_1,
  \end{equation}
  and
  \begin{equation}\label{eqn:uppertripleLSound}
   \frac{1}{k_1+k_2}
  \sum_{h \in H_{k_1+k_2}} L\left(\tfrac{1}{2}, f \times g \times h\right)^l
   \ll_{\varepsilon} l\log (k_1+k_2)^{\frac{l(l-1)}{2} + (l^2+1)\varepsilon}.
  \end{equation}
\end{proposition}

\begin{proof}[Proof of Theorem~\ref{thm:hecke}]
Watson's formula~\cite{Watson2008} gives
\begin{equation*}
  |\langle f^2 , h\rangle|^2
  \ll \frac{1}{k_1} \frac{L(1/2, h) L(1/2, \sym^2 f \times h)}
    {L(1, \sym^2 f)^2 L(1, \sym^2 h)},
\end{equation*}
and
\begin{equation*}
  |\langle f g , h\rangle|^2
  \ll \frac{1}{k_1 + k_2} \frac{L(1/2, f \times g \times h)}
    {L(1, \sym^2 f) L(1, \sym^2 g) L(1, \sym^2 h)},
\end{equation*}
where the non-negativity of the central \(L\)-values follows from Lapid's theorem~\cite{Lapid2003}.
Under the GRH, for $\phi\in H_k$ we have $(\log\log k)^{-1} \ll  L(1, \sym^2 \phi) \ll (\log\log k)^{3}$ (see \cite[Theorem 3]{LauWu2006}).
Then Theorem~\ref{thm:hecke} follows from Proposition~\ref{prop:LboundSound}.
\end{proof}

From Theorems~\ref{thm:Heckejoint} and \ref{thm:lpnormformodular}, we could see the coefficients of quadratic forms of holomorphic cusp forms in the Hecke basis are not uniformly small, being dominated by small-amplitude components.
Let \(A\) denote the upper bound for the \(L^4\)-norm established by Zenz~\cite{Zenz2023}. By the Cauchy--Schwarz inequality, we have $A\ge 1$.

\begin{theorem}\label{thm:large}
Let
\[
\sum_{i\ne j} a_{i,j}
\notin
\left\{
\sum_i a_{i,i} y_i : y_i\in[1,A]
\right\}.
\]
where $\mathcal{A}=[1,A]$.
 Assuming the above conditions, there exist coordinates for the quadratic form \(Q\) with respect to the Hecke orthogonal basis that are of larger-than-average order as \(k\to\infty\).
\end{theorem}

\begin{proof}[Proof of Theorem~\ref{thm:large}]
  In this case, from \cite{Zenz2023} and Theorem~\ref{thm:Heckejoint}, we know the leading term for the \(\ell^2\)-norm, which corresponds to the mixed \(L^4\)-norm associated with \(Q\), is nonzero and of constant order.
From Theorem~\ref{thm:lpnormformodular}, we see that the average order of coordinates of \(Q\) is smaller than \(1/\sqrt{k}\), up to a logarithmic factor.
Hence there exists at least one coordinate whose magnitude is significantly larger than the average order.
\end{proof}

\section{Upper Bounds for Moments of
\texorpdfstring{$L$}{l}-Functions}\label{sec:moments}

In this section, we establish Proposition~\ref{prop:LboundSound} by applying Soundararajan's method~\cite{Soundararajan2009}.
For related results and alternative approaches, Lester and Radziwi{\l\l}~\cite{LesterRadziwill2020} studied quantum unique ergodicity for half-integral weight automorphic forms; Huang and Lester~\cite{HuangLester2023} investigated the quantum variance of dihedral Maass forms; Blomer, Brumley, and Khayutin~\cite{BlomerBrumleyKhayutin2022} proved the joint equidistribution conjecture proposed by Michel and Venkatesh in their 2006 ICM proceedings article~\cite{MichelVenkatesh2006}; and Hua, Huang, and Li~\cite{HuaHuangLi2024} established a case of their joint Gaussian moment conjecture, and the holomorphic version is discussed by Huang~\cite{Huang2024}.
Chatzakos, Cherubini, Lester, and Risager~\cite{CCLS2025} obtained a logarithmic improvement on Selberg's longstanding bound for the error term in the hyperbolic circle problem counting function over Heegner points with varying discriminants.

In this chapter, we use \( p \) to denote a prime number, as opposed to its meaning in Theorem~\ref{thm:hecke}.
Let $\lambda_h(n)$ be the $n$-th Hecke eigenvalue of $h$.
We will use the following lemma, which is a consequence of Petersson's formula.

\begin{lemma}[{\cite[Lemma 2.1]{RudnickSoundararajan2006}}]\label{lemma:petersson}
  Let \( k \) be a large even integer. For natural numbers \( m \) and \( n \) satisfying \( mn \leq k^2 / 10^4 \), we have
  \[
    \frac{2\pi^2}{k - 1} \sum_{h \in H_k} \frac{\lambda_h(m)\lambda_h(n)}{L(1, \sym^2 h)} = \delta_{m=n} + O(e^{-k}).
  \]
\end{lemma}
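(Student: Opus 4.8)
The plan is to obtain this directly from the classical Petersson trace formula, after two adjustments: rewriting the harmonic weight in terms of $L(1,\sym^2 h)$, and showing that the off-diagonal (Kloosterman) term is exponentially small thanks to the constraint $mn \le k^2/10^4$.

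First I would invoke Petersson's formula for the (arithmetically normalized) Hecke eigenbasis $H_k$: with the harmonic weight $\omega_h := \frac{\Gamma(k-1)}{(4\pi)^{k-1}\langle h,h\rangle}$ one has
\[
  \sum_{h \in H_k} \omega_h\, \lambda_h(m)\lambda_h(n)
  = \delta_{m=n} + 2\pi\, i^{-k}\sum_{c=1}^{\infty}\frac{S(m,n;c)}{c}\, J_{k-1}\!\left(\frac{4\pi\sqrt{mn}}{c}\right),
\]
where $S(m,n;c)$ is a Kloosterman sum and $J_{k-1}$ a $J$-Bessel function. A standard Rankin--Selberg unfolding (comparing residues at $s=1$ of $\int_{\Gamma\backslash\mathbb{H}} y^{k}|h|^{2}E(z,s)\,d\mu$ against $\frac{\Gamma(s+k-1)}{(4\pi)^{s+k-1}}\sum_n |a_h(n)|^2 n^{-s-k+1}$) gives $\langle h,h\rangle = \frac{2}{\pi}\cdot\frac{\Gamma(k)}{(4\pi)^k}\,L(1,\sym^2 h)$, hence $\omega_h = \frac{2\pi^2}{(k-1)\,L(1,\sym^2 h)}$ exactly. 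Thus the left-hand side of the lemma is precisely $\sum_{h\in H_k}\omega_h\lambda_h(m)\lambda_h(n)$, and it remains only to bound the $c$-sum by $O(e^{-k})$.

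For that I would use the trivial bound $|S(m,n;c)| \le c$ (Weil's bound is not needed here), reducing matters to $\sum_{c\ge1}\big|J_{k-1}(4\pi\sqrt{mn}/c)\big|$. Since $mn \le k^2/10^4$, the argument $x := 4\pi\sqrt{mn}/c \le 4\pi k/100$ lies comfortably below the transition point $k-1$ of $J_{k-1}$, in the range of exponential decay. I would split at $c_0 = \lceil 4\pi\sqrt{mn}\rceil$: for the $\ll \sqrt{mn}\ll k$ terms with $c < c_0$ the uniform Debye-type bound $|J_{k-1}(x)| \ll \exp\!\big(-(k-1)(\alpha-\tanh\alpha)\big)$ applies, where $\alpha>0$ is fixed by $\cosh\alpha = (k-1)/x$; because $x/(k-1) \le 4\pi/100 + o(1)$ is bounded away from $1$, the rate $\alpha-\tanh\alpha$ comes out larger than $1$ --- this is exactly what the factor $10^{-4}$ secures --- so each term is $O(e^{-k})$ and their total is $O(ke^{-k})$; for $c \ge c_0$ one has $x \le 1$ and the elementary bound $|J_{k-1}(x)| \ll (x/2)^{k-1}/\Gamma(k)$ renders the tail negligible. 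Combining, the off-diagonal term is $O(e^{-k})$, as claimed. The only point needing genuine care is the numerical verification that the exponential rate in the Bessel estimate exceeds $1$ under $mn \le k^2/10^4$; the remaining steps are routine bookkeeping with standard identities.
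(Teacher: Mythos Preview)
The paper does not supply a proof of this lemma; it simply quotes it from \cite[Lemma~2.1]{RudnickSoundararajan2006}. Your sketch is the standard derivation and is correct in all essentials: Petersson's formula, the identity $\omega_h=\tfrac{2\pi^2}{(k-1)L(1,\sym^2 h)}$ from Rankin--Selberg unfolding, and then the exponential decay of $J_{k-1}(x)$ for $x/(k-1)\le 4\pi/100+o(1)$. The numerical check you flag is fine --- at $\operatorname{sech}\alpha\approx 0.126$ one finds $\alpha-\tanh\alpha\approx 1.77>1$ --- and the split at $c_0=\lceil 4\pi\sqrt{mn}\rceil$ handles both ranges. One small point worth tightening in the tail: the bound $(x/2)^{k-1}/\Gamma(k)$ alone is not summable in $c$, but combining it with $\sum_{c\ge c_0}c^{-(k-1)}\ll c_0^{-(k-2)}$ makes the tail $\ll (1/2)^{k-1}c_0/\Gamma(k)$, which is negligible; you should make this explicit rather than declare the tail ``negligible'' outright.
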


Let $\alpha_f,\beta_f$, $\alpha_g,\beta_g$, and $\alpha_h,\beta_h$ denote the Satake parameters for $f$, $g$, and $h$, respectively.

\begin{lemma}\label{lemma:sumusesummationformula}
Assume the GRH for $L(s, \sym^2 h)$.
Let \( r \in \mathbb{N} \). Then, for \( x \leq (k_1 + k_2)^{\frac{1}{10r}} \) and any real numbers \( a_p \ll p^{\varepsilon} \) for any \( \varepsilon > 0 \), we have
\begin{equation}
\sum_{h \in H_{k_1 + k_2}}
\left( \sum_{p \leq x} \frac{a_p \lambda_h(p)}{p^{1/2}} \right)^{2r}
\ll \frac{(2r)!}{r! \, 2^r} (k_1 + k_2)(\log\log(k_1 + k_2))^3
\left( \sum_{p \leq x} \frac{a_p^2}{p} \right)^r.
\end{equation}
\end{lemma}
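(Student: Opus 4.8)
The plan is to reduce the $2r$-th moment of the linear form $\sum_{p\le x}a_p\lambda_h(p)p^{-1/2}$ over $h\in H_{k_1+k_2}$ to an application of Petersson's formula in the shape of Lemma~\ref{lemma:petersson}, weighting by $1/L(1,\sym^2 h)$ and then removing that weight using the GRH bound $L(1,\sym^2 h)\ll(\log\log(k_1+k_2))^3$. First I would expand the $2r$-th power multinomially: writing $P(h)=\sum_{p\le x}a_p\lambda_h(p)p^{-1/2}$, we get
\[
P(h)^{2r}=\sum_{p_1,\dots,p_{2r}\le x}\frac{a_{p_1}\cdots a_{p_{2r}}}{(p_1\cdots p_{2r})^{1/2}}\,\lambda_h(p_1)\cdots\lambda_h(p_{2r}).
\]
Using the Hecke multiplicativity relations, each product $\lambda_h(p_1)\cdots\lambda_h(p_{2r})$ expands as a bounded (depending only on $r$) linear combination $\sum_m c_m\lambda_h(m)$ with $c_m\ll_r 1$ and every $m$ dividing $(p_1\cdots p_{2r})$, so $m\le x^{2r}\le(k_1+k_2)^{1/5}\le(k_1+k_2)^2/10^4$ for $k_1+k_2$ large; this is exactly the range where Lemma~\ref{lemma:petersson} applies. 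Hence $\sum_{h}\lambda_h(m)/L(1,\sym^2 h)=\frac{k-1}{2\pi^2}(\delta_{m=1}+O(e^{-k}))$ with $k=k_1+k_2$, and the error term is negligible after summing the $O(x^{2r})$ many terms.

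The second step is the combinatorial heart: one must show that the diagonal contribution — the terms surviving $m=1$ — is bounded by $\frac{(2r)!}{r!\,2^r}\left(\sum_{p\le x}a_p^2/p\right)^r$. The product $\lambda_h(p_1)\cdots\lambda_h(p_{2r})$ has a nonzero "constant term" (coefficient of $\lambda_h(1)$) precisely when the multiset $\{p_1,\dots,p_{2r}\}$ can be paired off so that each prime appears an even number of times; for such a configuration the constant term is $\prod_p$ (a Catalan-type factor from $\sym^{e_p}$ decompositions) which is $\le 1$. Matching terms to perfect pairings of $2r$ objects into $r$ pairs gives the factor $\frac{(2r)!}{r!\,2^r}=(2r-1)!!$ and the sum over which prime sits in each pair reproduces $\left(\sum_{p}a_p^2/p\right)^r$; configurations where some prime appears with multiplicity $\ge 4$ contribute less and are absorbed. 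This is the standard Gaussian-moment combinatorics used in Soundararajan's method, so I would cite or mimic the corresponding lemma in \cite{Soundararajan2009} or \cite{RudnickSoundararajan2006}.

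Finally, I would reinstate the missing $1/L(1,\sym^2 h)$: since we need $\sum_h P(h)^{2r}$ rather than $\sum_h P(h)^{2r}/L(1,\sym^2 h)$, I multiply and divide, using $L(1,\sym^2 h)\ll(\log\log k)^3$ under GRH (as quoted from \cite{LauWu2006}), which costs exactly the stated $(\log\log(k_1+k_2))^3$ factor:
\[
\sum_{h\in H_k}P(h)^{2r}\le (\log\log k)^3\sum_{h\in H_k}\frac{P(h)^{2r}}{L(1,\sym^2 h)}.
\]
The main obstacle I anticipate is not any single deep input but bookkeeping: controlling the Hecke-algebra expansion of a $2r$-fold product of $\lambda_h(p_i)$'s uniformly in $r$, verifying that all resulting moduli $m$ stay within the Petersson range $mn\le k^2/10^4$ (which forces the restriction $x\le k^{1/(10r)}$), and checking that the off-diagonal and higher-multiplicity terms are genuinely dominated by the pairing term rather than merely comparable. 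Once those are in hand the estimate assembles immediately. Note the lemma as stated only invokes GRH for $L(s,\sym^2 h)$ — consistent with the fact that Petersson's formula plus the $L(1,\sym^2 h)$ bound is all that is used here; the deeper GRH hypotheses of Theorem~\ref{thm:hecke} enter only later when this lemma is fed into Soundararajan's resonator/mollifier argument to prove Proposition~\ref{prop:Lbound}.
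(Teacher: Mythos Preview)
Your approach is essentially the paper's: expand the $2r$-th power, use the Hecke relation $\lambda_h(p)^e=\sum_{l}D_{e,l}\lambda_h(p^l)$ (the paper computes $D_{e,l}=\frac{e!(l+1)}{((e+l)/2+1)!\,((e-l)/2)!}$ explicitly), apply Lemma~\ref{lemma:petersson} with the harmonic weight, do the Gaussian combinatorics on the surviving $l_i=0$ diagonal, and finally remove the weight via $L(1,\sym^2 h)\ll(\log\log k)^3$. One slip to fix: the constant term of $\lambda_h(p)^{2f}$ is the Catalan number $C_f=\frac{(2f)!}{f!(f+1)!}$, which is \emph{not} $\le 1$ (already $C_2=2$); the inequality you actually need, and the one the paper uses, is $(f+1)!\ge 2^f$, equivalently $C_f\le (2f-1)!!$, which is precisely what makes your pairing bound $\frac{(2r)!}{r!\,2^r}\bigl(\sum_p a_p^2/p\bigr)^r$ go through.
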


\begin{proof}
Under the GRH, we have \( L(1, \sym^2 h) \ll (\log\log(k_1 + k_2))^3 \).
Using the identity
\[
\lambda_h(p^l) = \sum_{0 \leq m \leq l} \alpha_h(p)^m \beta_h(p)^{l - m},
\]
we obtain
\[
\lambda_h(p)^k = (\alpha_h(p) + \beta_h(p))^k
= \sum_{\substack{0 \leq l \leq k \\ l \equiv k \, (\mathrm{mod} \, 2)}} D_{k,l} \lambda_h(p^l),
\]
where
\[
D_{k,l} = \frac{k!}{\left( \frac{k+l}{2} \right)! \left( \frac{k-l}{2} \right)!}
- \sum_{0 < m \leq \frac{k-l}{2}} D_{k, l+2m}, \quad \text{with} \quad D_{k,k} = 1.
\]
So,
\[
D_{k,l} = \frac{k! (l+1)}{\left( \frac{k+l}{2} + 1 \right)! \left( \frac{k - l}{2} \right)!}.
\]

Let \( a_n = \prod_{p^j \| n} a_p^j \). Then we have
\begin{multline}\label{eqn:counting}
\sum_{h \in H_{k_1 + k_2}}
\left( \sum_{p \leq x} \frac{a_p \lambda_h(p)}{p^{1/2}} \right)^{2r}
\\=
\sum_{\substack{n = p_1^{e_1} \cdots p_q^{e_q} \\ p_i \leq x \\ \sum e_i = 2r}}
\frac{a_n}{n^{1/2}}
\sum_{\substack{0 \leq l_i \leq e_i \\ l_i \equiv e_i \, (\mathrm{mod} \, 2)}}
\frac{(2r)! \prod_{i=1}^q (l_i + 1)}
{\prod_{i=1}^q \left( \left( \frac{e_i + l_i}{2} + 1 \right)! \left( \frac{e_i - l_i}{2} \right)! \right)}
\sum_{j} \frac{\lambda_h(p_1^{l_1} \cdots p_q^{l_q})}{L(1, \sym^2 h)}.
\end{multline}

Using Lemma~\ref{lemma:petersson}, this is equal to
\[
\frac{k_1 + k_2 - 1}{2\pi^2}
\sum_{\substack{n = p_1^{2f_1} \cdots p_q^{2f_q} \\ p_i \leq x \\ \sum f_i = r}}
\frac{(2r)!}{\prod_{i=1}^q \left( f_i! (f_i + 1)! \right)}
\frac{a_n}{n^{1/2}} + O(e^{-0.99(k_1+k_2)}).
\]

Since \( a_{p_1^{2f_1} \cdots p_q^{2f_q}} \geq 0 \), and using the inequality \( (n+1)! \geq 2^n \), we have
\[
\frac{(2r)!}{\prod_{i=1}^q f_i! (f_i + 1)!}
\leq \frac{(2r)!}{r!} \cdot \frac{r!}{\prod_{i=1}^q f_i! \cdot 2^{f_i}}
= \frac{(2r)!}{r! 2^r} \cdot \frac{r!}{\prod_{i=1}^q f_i!}.
\]

From the trivial bound
\[
\frac{e_i!}{\lceil \frac{e_i}{2} \rceil ! \, \lfloor \frac{e_i}{2} \rfloor !} \leq 2^{e_i},
\]
we finally obtain:
\begin{multline*}
\sum_{h \in H_{k_1 + k_2}} \frac{1}{L(1,\sym^2 h)}
\left( \sum_{p \leq x} \frac{a_p \lambda_h(p)}{p^{1/2}} \right)^{2r}
\ll \frac{(2r)!}{r! 2^r} (k_1 + k_2)
\sum_{\substack{n = p_1^{2f_1} \cdots p_q^{2f_q} \\ \sum f_i = r}}
\frac{r!}{\prod_{i=1}^q f_i!} \cdot \frac{|a_n|}{n^{1/2}} \\
\ll \frac{(2r)!}{r! 2^r}
(k_1 + k_2)
\left( \sum_{p \leq x} \frac{a_p^2}{p} \right)^r.
\end{multline*}
This completes the proof.
\end{proof}

Let
\begin{equation}\label{def:Lambdafgh}
  \Lambda_{f \times g \times h}(p^n)
  = \bigl(\alpha_f(p)^n + \beta_f(p)^n\bigr)
  \bigl(\alpha_g(p)^n + \beta_g(p)^n\bigr)
  \bigl(\alpha_h(p)^n + \beta_h(p)^n\bigr).
\end{equation}
In particular, we have the Hecke relation
\begin{equation}\label{eqn:heckerelation}
  \Lambda_{f \times g \times h}(p^2)
  = \Bigl( \Lambda_{\sym^2 f}(p) - 1 \Bigr)
  \Bigl( \Lambda_{\sym^2 g}(p) - 1 \Bigr)
  \Bigl( \Lambda_{\sym^2 h}(p) - 1 \Bigr).
\end{equation}

\begin{lemma}[{\cite[Theorem 2.1]{Chandee2009}}]\label{lemma:LogLfunctions}
  Under the assumptions of Theorem~\ref{thm:hecke}, including the GRH for
  $L(s, f \times g \times h)$, we have for $x > 10$:
  \begin{equation}\label{eqn:logLfunctions}
    \log L\biggl(\frac{1}{2}, f \times g \times h\biggr) \leq
    \sum_{p^n \leq x} \frac{\Lambda_{f \times g \times h}(p^n)}
    {n p^{n \left(\frac{1}{2} + \frac{1}{\log x}\right)}}
    \frac{\log \frac{x}{p^n}}{\log x}
    + O\biggl(\frac{\log (k_1 + k_2)}{\log x} + 1\biggr),
  \end{equation}
  where the implied constant is absolute.
\end{lemma}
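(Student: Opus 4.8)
This statement is the $t=0$ instance of Chandee's pointwise upper bound for $\log|L(1/2)|$ (\cite[Theorem~2.1]{Chandee2009}, itself an adaptation of Soundararajan's method), applied to the degree-$8$ triple product $L$-function of $f$, $g$, $h$; the plan is to run that argument here and keep track of the analytic conductor. The inputs needed are: the Euler product $L(s,f\times g\times h)=\prod_p\prod_{i=1}^{8}(1-\gamma_{i,p}p^{-s})^{-1}$, whose local parameters $\gamma_{i,p}$ are the products $abc$ with $a\in\{\alpha_f,\beta_f\}$, $b\in\{\alpha_g,\beta_g\}$, $c\in\{\alpha_h,\beta_h\}$ and hence, by Deligne, have $|\gamma_{i,p}|=1$, so that $\log L(s,f\times g\times h)=\sum_{p}\sum_{n\ge 1}\Lambda_{f\times g\times h}(p^n)/(np^{ns})$ for $\Re s>1$ with $|\Lambda_{f\times g\times h}(p^n)|\le 8$; the fact that $L(s,f\times g\times h)$ extends to an entire function with a functional equation $s\mapsto 1-s$ for the completed $L$-function $\Lambda(s)=\mathfrak q^{s/2}\gamma_\infty(s)L(s,f\times g\times h)$ (by the theory of triple-product $L$-functions of cusp forms; no hypothesis is needed here), whose archimedean conductor is $\mathfrak q\ll(k_1+k_2)^{O(1)}$, the factor $\gamma_\infty$ being a product of $8$ gamma factors whose shifts are bounded linear combinations of $k_1,k_2,k_1+k_2$; and the GRH for $L(s,f\times g\times h)$, i.e.\ that all nontrivial zeros have $\Re s=1/2$.

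The mechanism is standard. From the Hadamard factorization of $\Lambda(s)$ one obtains the explicit formula $\frac{L'}{L}(s,f\times g\times h)=B-\tfrac12\log\mathfrak q-\tfrac{\gamma_\infty'}{\gamma_\infty}(s)+\sum_\rho\bigl(\tfrac{1}{s-\rho}+\tfrac{1}{\rho}\bigr)$. Fix $\sigma_0=\tfrac12+\tfrac1{\log x}$ and integrate the logarithmic derivative $-\frac{L'}{L}(s,f\times g\times h)$ against Soundararajan's kernel, the one tailored so that the Fejér weight $\tfrac1{\log x}\max\{0,\log(x/y)\}$ emerges: on the Dirichlet-series side, termwise integration reproduces exactly $\sum_{p^n\le x}\frac{\Lambda_{f\times g\times h}(p^n)}{np^{n\sigma_0}}\cdot\frac{\log(x/p^n)}{\log x}$; on the explicit-formula side, moving the integration across the critical line produces $\log L(\tfrac12,f\times g\times h)$ from the residue at $s=\tfrac12$, a sum over zeros $\sum_\rho K(\rho)$ with a Fejér-type kernel $K$, and terms coming from $\log\mathfrak q$ and $\tfrac{\gamma_\infty'}{\gamma_\infty}$. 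Passing from $\sigma_0$ back to $\tfrac12$ and estimating the last two contributions by $\log\mathfrak q\ll\log(k_1+k_2)$ and Stirling yields precisely the error $O\bigl(\tfrac{\log(k_1+k_2)}{\log x}+1\bigr)$ of \eqref{eqn:logLfunctions}.

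The one essential use of GRH is that then each summand $\Re K(\rho)$, with $\rho=\tfrac12+i\gamma$, is a nonnegative (squared sinc / Fejér) quantity, so the whole sum over zeros is $\ge 0$ and may simply be dropped, leaving the asserted one-sided inequality. Off-line zeros would contribute with indefinite sign and the method would collapse, so this step is where GRH is indispensable.

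The main work, as always with this method, is the bookkeeping in the contour shift — justifying termwise integration of $\log L$ on the line of absolute convergence $\Re s>1$ (here $|\Lambda_{f\times g\times h}(p^n)|\le 8$ is more than enough), handling the residue at the double point so as to extract $\log L(\tfrac12,f\times g\times h)$ cleanly, and verifying the single genuinely case-specific input, namely that the archimedean conductor of $L(s,f\times g\times h)$ grows polynomially in $k_1+k_2$ so that its logarithm is absorbed into $O(\log(k_1+k_2)/\log x)$. Since none of this differs from \cite[Theorem~2.1]{Chandee2009} once that conductor bound is in hand, in the write-up I would record the bound and then quote Chandee's theorem directly.
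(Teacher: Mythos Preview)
Your sketch is correct and matches the paper's treatment: the paper gives no proof at all for this lemma, simply citing \cite[Theorem~2.1]{Chandee2009}, which is exactly what you conclude you would do after verifying that the analytic conductor of $L(s,f\times g\times h)$ is polynomial in $k_1+k_2$. Your outline of the Chandee--Soundararajan argument (explicit formula, Fej\'er-weighted Dirichlet polynomial, nonnegativity of the zero-sum under GRH) is accurate and more detailed than anything in the paper itself.
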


\begin{lemma}\label{lemma:sumofcoefficients}
 Under the assumptions of Theorem~\ref{thm:hecke}, including the analytic continuation of $L(s, \sym^2 f \times \sym^2 g\times \sym^2 h)$ and the GRH for the relevant \(L\)-functions, the following estimates hold for $x \geq 2$:
  \begin{align}
    \sum_{p \leq x} \frac{\lambda_{\sym^2 f}(p)\lambda_{\sym^2 g}(p) \lambda_{\sym^2 h}(p)}{p}
    &= O(\log\log\log (k_1 + k_2)), \label{eqn:sumofcoefficients3} \\
        \sum_{p \leq x} \frac{\lambda_{\sym^2 f}(p) \lambda_{\sym^2 g}(p)}{p}
    &= O(\log\log\log (k_1 + k_2)), \label{eqn:sumofcoefficients2a} \\
    \sum_{p \leq x} \frac{\lambda_{\sym^2 f}(p) \lambda_{\sym^2 h}(p)}{p}
    &= O(\log\log\log (k_1 + k_2)), \label{eqn:sumofcoefficients2b} \\
    \sum_{p \leq x} \frac{\lambda_{\sym^2 g}(p) \lambda_{\sym^2 h}(p)}{p}
    &= O(\log\log\log (k_1 + k_2)), \label{eqn:sumofcoefficients2c} \\
    \sum_{p \leq x} \frac{\lambda_{\sym^2 f}(p)}{p}
    &= O(\log\log\log (k_1 + k_2)), \label{eqn:sumofcoefficients1a} \\
    \sum_{p \leq x} \frac{\lambda_{\sym^2 g}(p)}{p}
    &= O(\log\log\log k_1), \label{eqn:sumofcoefficients1b} \\
    \sum_{p \leq x} \frac{\lambda_{\sym^2 h}(p)}{p}
    &= O(\log\log\log k_2). \label{eqn:sumofcoefficients1c}
  \end{align}
\end{lemma}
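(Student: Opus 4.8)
The plan is to identify each of the seven prime sums with a single coefficient sum $\sum_{p\le x}\lambda_\pi(p)/p$ attached to one of the $L$-functions for which GRH (and, in one case, analytic continuation) is being assumed, and then to bound that sum uniformly in $x\ge 2$. Since $f$, $g$, $h$ have level one, every prime is unramified for $f$, $g$, $h$ and for all their functorial and isobaric combinations, so the $p$-th Dirichlet coefficient of a Rankin--Selberg or triple-product $L$-function is exactly the product of the $p$-th coefficients of its factors; in particular $\lambda_{\sym^2 f}(p)\lambda_{\sym^2 g}(p)\lambda_{\sym^2 h}(p)=\lambda_{\sym^2 f\times\sym^2 g\times\sym^2 h}(p)$, $\lambda_{\sym^2 f}(p)\lambda_{\sym^2 g}(p)=\lambda_{\sym^2 f\times\sym^2 g}(p)$, and similarly for the remaining five sums. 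I would next record that all of these $L$-functions are entire: $L(s,\sym^2 f)$, $L(s,\sym^2 g)$, $L(s,\sym^2 h)$ are Gelbart--Jacquet lifts, cuspidal on $\GL_3$ because level-one eigenforms are non-dihedral (there are no CM forms of level one); the three degree-$9$ Rankin--Selberg $L$-functions are entire, by the $\GL_3\times\GL_3$ Rankin--Selberg theory, because $\sym^2 f$, $\sym^2 g$, $\sym^2 h$ are pairwise non-isomorphic (the weights $k_1$, $k_2$, $k_1+k_2$ force $\pi_h\not\cong\pi_f,\pi_g$, and $f\neq g$ together with level one rules out the quadratic twist relating $\sym^2 f$ to $\sym^2 g$ when $k_1=k_2$); and $L(s,\sym^2 f\times\sym^2 g\times\sym^2 h)$ is covered by the hypothesised analytic continuation and functional equation. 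In each case the analytic conductor is a bounded power of the relevant weight, so $\log C(\pi)\ll\log(k_1+k_2)$ and $\deg\pi=O(1)$.

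It then remains to prove, under GRH for $L(s,\pi)$, that $\sum_{p\le x}\lambda_\pi(p)/p\ll\log\log\log(k_1+k_2)$ for every such entire $\pi$ and every $x\ge 2$ (with $k_1+k_2$ replaced by the weight of the single form in the cases \eqref{eqn:sumofcoefficients1a}--\eqref{eqn:sumofcoefficients1c}). I would split on the size of $x$. If $2\le x\le(\log(k_1+k_2))^{4}$, then Deligne's bound for $f$, $g$, $h$ gives $|\lambda_\pi(p)|\le\deg\pi=O(1)$, whence by Mertens' estimate $\sum_{p\le x}|\lambda_\pi(p)|/p\ll\log\log x+O(1)\ll\log\log\log(k_1+k_2)$. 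If $x>(\log(k_1+k_2))^{4}$, I would write
\[
  \sum_{p\le x}\frac{\lambda_\pi(p)}{p}
  =\sum_{p\le(\log(k_1+k_2))^{4}}\frac{\lambda_\pi(p)}{p}
  +\sum_{(\log(k_1+k_2))^{4}<p\le x}\frac{\lambda_\pi(p)}{p};
\]
the first term is covered by the previous range, and the tail is $o(1)$ uniformly in $x$, by partial summation from the standard GRH-conditional bound $\psi_\pi(t):=\sum_{p^n\le t}\Lambda_\pi(p^n)\log p\ll\sqrt t\,(\log(C(\pi)t))^{2}$ (whose proof uses entirety, the functional equation, and the finite order of $L(s,\pi)$): the $x$-dependence cancels in the partial summation, leaving a contribution $\ll(\log C(\pi))^{2}\big/\big((\log(k_1+k_2))^{2}\log\log(k_1+k_2)\big)=o(1)$. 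Combining the two ranges yields the seven estimates.

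The main obstacle is the degree-$27$ sum \eqref{eqn:sumofcoefficients3}: for $\pi=\sym^2 f\times\sym^2 g\times\sym^2 h$ the bound $\psi_\pi(t)\ll\sqrt t\,(\log(C(\pi)t))^{2}$ rests on the \emph{hypothesised} analytic continuation, functional equation, and finite order, since this $L$-function is not known to be automorphic; the write-up should isolate precisely which analytic inputs are invoked when the explicit formula is applied (holomorphy in a neighbourhood of $\Re s\ge 1$, polynomial growth in vertical strips, and the gamma factor pinning down $C(\pi)\ll(k_1+k_2)^{O(1)}$), together with the assumed GRH for that $L$-function. For the remaining six sums the only nontrivial point is the entirety of the three degree-$9$ Rankin--Selberg $L$-functions, i.e.\ the pairwise non-isomorphisms above; after that everything is the same GRH mean-value machinery already used in this section.
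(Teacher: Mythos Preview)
Your argument is correct and follows the same two–range strategy as the paper: handle primes $p\le(\log(k_1+k_2))^{A}$ trivially by Deligne plus Mertens, and show that the tail over larger primes is $O(1)$ using GRH. The difference lies only in how the tail is controlled. The paper applies Perron's formula to $\log L(s+1,\pi)$, shifts the contour to $\Re s=-\tfrac12+\tfrac{1}{\log x}$ using the Littlewood bound $|\log L(s,\pi)|\ll(\Re s-\tfrac12)^{-1}\log(C(\pi)+|\Im s|)$, and picks up the residue $\log L(1,\pi)$; subtracting the resulting formula at two values of $x$ makes $\log L(1,\pi)$ cancel and leaves $O(1)$. You instead invoke the GRH prime number theorem $\psi_\pi(t)\ll\sqrt{t}\,(\log(C(\pi)t))^2$ and run real-variable partial summation, which is slightly more direct since no auxiliary quantity has to cancel. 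Both routes rest on exactly the same analytic package for the degree-$27$ factor (entirety, functional equation, polynomial growth, GRH), and your remark that these hypotheses should be made explicit for $L(s,\sym^2 f\times\sym^2 g\times\sym^2 h)$ is well taken --- the paper's contour-shift proof uses them just as much as yours does, via the Littlewood bound. Your discussion of why $\sym^2 f$, $\sym^2 g$, $\sym^2 h$ are pairwise non-isomorphic (distinct weights and the absence of level-one CM forms) is a useful unpacking of the paper's citation of Ramakrishnan.
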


\begin{proof}
  We establish the first bound \eqref{eqn:sumofcoefficients3} in detail; the others follow similarly using facts such as $\sym^2 f \ncong \sym^2 g \ncong \sym^2 h$.
  From \cite{GelbartJacquet1978}, we know that $\sym^2 f,~\sym^2 g,~\sym^2 g$ are self-dual cusp forms over $\SL_3(\mathbb{Z})$, and \cite{Ramakrishnan2014} establishes that $\sym^2 f \ncong \sym^2 g \ncong \sym^2 h$.

  Assuming the GRH for $L(s, \sym^2 f \times \sym^2 g\times \sym^2 h)$, the function $\log L(s, \sym^2 f \times \sym^2 g\times \sym^2 h)$ is analytic for $\Re(s) \geq \frac{1}{2} + \frac{1}{\log x}$. By a classical argument of Littlewood \cite[(14.2.2)]{Titchmarsh1986}, in this region we have
  \begin{equation}\label{eqn:logLbound}
    \left|\log L\left(s, \sym^2 f \times \sym^2 g\times \sym^2 h\right)\right|
    \ll \left(\Re(s) - \frac{1}{2}\right)^{-1} \log \left(k_1 + k_2 + |\Im(s)|\right).
  \end{equation}

  For $\Re(s) > 0$, we have
  \begin{equation*}
    \sum_{n} \frac{|\Lambda_{\sym^2 f}(n) \Lambda_{\sym^2 g}(n)\Lambda_{\sym^2 h}(n)|}{n^{1+s}} \ll 1,
  \end{equation*}
  and Deligne's bound yields
  \begin{equation*}
    \sum_{a \geq 2} \sum_{p^a \leq x} \frac{|\Lambda_{\sym^2 f}(p^a)
    \Lambda_{\sym^2 g}(p^a) \Lambda_{\sym^2 h}(p^a)|}{p^a} \ll 1.
  \end{equation*}

  Applying Perron's formula for $x \geq 2$ gives
  \begin{multline}
    \sum_{p \leq x} \frac{\lambda_{\sym^2 f}(p)\lambda_{\sym^2 g}(p) \lambda_{\sym^2 h}(p)}{p}
    = \sum_{p \leq x} \frac{\Lambda_{\sym^2 f}(p)\Lambda_{\sym^2 g}(p) \Lambda_{\sym^2 h}(p)}{p} \\
    = \frac{1}{2\pi i} \int_{1 - ix\log(k_1 + k_2 + x)}^{1 + ix\log(k_1 + k_2 + x)}
    \log L(s + 1, \sym^2 f \times \sym^2 g\times \sym^2 h) x^s \frac{\dd s}{s} \\
    + O\left(\frac{x \log x}{x \log(k_1 + k_2 + x)}\right)
    + O\left(\frac{x \sum_{p \text{ prime}} \frac{|\lambda_{\sym^2 f}(p)\lambda_{\sym^2 g}(p) \lambda_{\sym^2 h}(p)|}{p^2}}{x \log(k_1 + k_2 + x)}\right)
    + O(1).
  \end{multline}

  Shifting the contour to $\Re(s) = -\frac{1}{2} + \frac{1}{\log x}$, we encounter a simple pole at $s = 0$ with residue $\log L(1, \sym^2 f \times \sym^2 g\times \sym^2 h)$. The upper horizontal contour is bounded by
  \begin{multline}
    \ll \frac{1}{x \log(k_1 + k_2 + x)}
    \int_{-\frac{1}{2} + \frac{1}{\log x} + ix \log(k_1 + k_2 + x)}^{1 + ix \log(k_1 + k_2 + x)}
    |\log L(s + 1, \sym^2 f \times \sym^2 g\times \sym^2 h)| |x^s| |\dd s| \\
    \ll \frac{\log x \log(k_1 + k_2 + x \log(k_1 + k_2 + x))}{x \log(k_1 + k_2 + x)}
    \int_{-\frac{1}{2}}^{1} x^u \dd u \ll 1,
  \end{multline}
  and similarly for the lower horizontal contour.

  From \eqref{eqn:logLbound}, we obtain for $x \geq 2$:
  \begin{multline}
    \sum_{p \leq x} \frac{\lambda_{\sym^2 f}(p)\lambda_{\sym^2 g}(p)\lambda_{\sym^2 h}(p)}{p}
    = \log L(1, \sym^2 f \times \sym^2 g\times \sym^2 h) \\
    + O\left(1 + \frac{\log x}{\sqrt{x}} \int_{-x \log(k_1 + k_2 + x)}^{x \log(k_1 + k_2 + x)}
    \frac{\log (k_1 + k_2 + u)}{1 + |u|} \dd u \right).
  \end{multline}

  Applying this estimate twice yields for $z \geq (\log(k_1 + k_2))^3$:
  \begin{equation}
    \left| \sum_{(\log (k_1 + k_2))^3 < p \leq z} \frac{\lambda_{\sym^2 f}(p)\lambda_{\sym^2 g}(p)\lambda_{\sym^2 h}(p)}{p} \right| \ll 1.
  \end{equation}
  For $y \leq (\log (k_1 + k_2))^3$, we have
  \begin{equation}
    \left| \sum_{p \leq y} \frac{\lambda_{\sym^2 f}(p)\lambda_{\sym^2 g}(p)\lambda_{\sym^2 h}(p)}{p} \right| \ll \log\log\log (k_1 + k_2).
  \end{equation}
  This completes the proof of \eqref{eqn:sumofcoefficients3}.
\end{proof}

Using Lemma~\ref{lemma:sumofcoefficients}, for $2 \leq y \leq x$, $l > 0$, and distinct Hecke--Maass forms $f, g$, we have
\begin{equation}\label{eqn:sumofsumoverp}
  \sum_{y < p \leq x} \frac{l^2 \lambda_{\sym^2 f}(p)^2
  \lambda_{\sym^2 g}(p)^2}{p}
  = l^2 \log \frac{\log x}{\log y} + O(l\log\log\log (k_1 + k_2)).
\end{equation}

Before stating our next lemma, we introduce the following notation. For parameters $2 \leq y \leq x$, define
\begin{equation}
  \mathcal{P}(h;x,y) = \sum_{p \leq y} \frac{l \lambda_f(p)\lambda_g(p) \lambda_h(p)}{p^{\frac{1}{2} + \frac{1}{\log x}}} \left(1 - \frac{\log p}{\log x}\right),
\end{equation}
and let $\mathcal{A}(V;x) = \#\{h \in H_{k_1 + k_2} : \mathcal{P}(h;x,x) > V\}$. We also define the variance
\begin{equation}
  \sigma(k_1 + k_2)^2 = l^2 \log\log (k_1 + k_2).
\end{equation}

\begin{lemma}\label{lemma:AXY}
  Under the assumptions of Theorem~\ref{thm:hecke}, including the automorphy of $\sym^2(f\otimes g)$ and the GRH for the relevant \(L\)-functions.
  Let $C \geq 1$ be fixed and $\varepsilon > 0$ be sufficiently small. With the above notation, for all
  \[
  l\sqrt{\log\log (k_1 + k_2)} \leq V \leq C l\frac{\log (k_1 + k_2)}{\log\log (k_1 + k_2)},
  \]
  we have the bound
  \begin{equation}
    \mathcal{A}\left(V; (k_1 + k_2)^{\frac{1}{\varepsilon V}}\right)
    \ll (k_1 + k_2) \left(e^{-\frac{(1 - 2\varepsilon)V^2}{2\sigma(k_1 + k_2)^2}} (\log\log (k_1 + k_2))^3 + e^{-\frac{\varepsilon}{11}V\log V}\right).
  \end{equation}
\end{lemma}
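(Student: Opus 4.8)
Set $X:=k_1+k_2$ and $X_1:=X^{1/(\varepsilon V)}$, so that $\mathcal{A}(V;X_1)=\#\{h\in H_{k_1+k_2}:\mathcal{P}(h;X_1,X_1)>V\}$, and write $\mathcal{P}(h;X_1,X_1)=\sum_{p\le X_1}a_p\lambda_h(p)p^{-1/2}$ with $a_p:=l\,\lambda_f(p)\lambda_g(p)\,p^{-1/\log X_1}\bigl(1-\tfrac{\log p}{\log X_1}\bigr)$, so $a_p\ll_l1$ by Deligne's bound. The plan is to run Soundararajan's large--deviation argument on this Dirichlet polynomial, but only after splitting the prime range so that the short part can be raised to an exceptionally high moment. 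The one piece of arithmetic needed is a variance estimate: expanding via the Hecke relations $\lambda_f(p)^2=\lambda_{\sym^2 f}(p)+1$, $\lambda_g(p)^2=\lambda_{\sym^2 g}(p)+1$ and using Lemma~\ref{lemma:sumofcoefficients} to bound $\sum_{p\le y}\lambda_{\sym^2 f}(p)/p$, $\sum_{p\le y}\lambda_{\sym^2 g}(p)/p$ and $\sum_{p\le y}\lambda_{\sym^2 f}(p)\lambda_{\sym^2 g}(p)/p$ by $O_l(\log\log\log X)$, one obtains
\[
\sum_{p\le y}\frac{a_p^2}{p}\ \le\ l^2\sum_{p\le y}\frac{\lambda_f(p)^2\lambda_g(p)^2}{p}\ =\ l^2\log\log y+O_l(\log\log\log X)\qquad(y\ge2),
\]
so in particular $\sum_{p\le X_1}a_p^2/p\le(1+o(1))\,\sigma(k_1+k_2)^2$. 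This is precisely where $f\ne g$ (so $\sym^2 f\ncong\sym^2 g$), the automorphy of $\sym^2(f\otimes g)$, and the GRH hypotheses get used.

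Fix the splitting point $X_0:=X^{1/(\varepsilon V\log V)}$ (legitimately $X_0\le X_1$, since $V\to\infty$) and the threshold $W:=\tfrac{\varepsilon}{3}V$, and write $\mathcal{P}(h;X_1,X_1)=\mathcal{P}_{\le X_0}(h)+\mathcal{P}_{(X_0,X_1]}(h)$; then $\mathcal{P}(h;X_1,X_1)>V$ forces $\mathcal{P}_{\le X_0}(h)>V-W$ or $\mathcal{P}_{(X_0,X_1]}(h)>W$, so it is enough to bound the two counts separately. For each, Chebyshev with an even exponent $2r$ followed by Lemma~\ref{lemma:sumusesummationformula} --- applicable as long as the relevant truncation is $\le X^{1/(10r)}$ --- gives a bound of the shape
\[
X\,(\log\log X)^3\,\frac{(2r)!}{r!\,2^r}\,\frac{v^{r}}{T^{2r}}\ \ll\ X\,(\log\log X)^3\Bigl(\frac{2r\,v}{e\,T^{2}}\Bigr)^{r},
\]
where $T$ is the threshold and $v$ the variance in question. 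For $\mathcal{P}_{\le X_0}$ the admissible range is $r\le\varepsilon V\log V/10$ --- this extra factor $\log V$, bought by the small exponent in $X_0$, is the heart of the matter --- and $v\le(1+o(1))\sigma^2$; for $\mathcal{P}_{(X_0,X_1]}$ only $r\le\varepsilon V/10$ is admissible, but there $v\le l^2\log\bigl(\tfrac{\log X_1}{\log X_0}\bigr)+O_l(\log\log\log X)=l^2\log\log V+O_l(\log\log\log X)=O_l(\log\log\log X)$, using $\log X_1/\log X_0=\log V\le\log\log X$.

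Optimising the short--part bound over $r$ gives two cases. If $V^2/(2\sigma^2)\le\varepsilon V\log V/10$ (i.e. $V$ at most of order $\sigma^2\log V$) the unconstrained optimum $r\asymp(V-W)^2/(2v)$ is admissible, producing $X(\log\log X)^3 e^{-(V-W)^2/(2v)}\le X(\log\log X)^3 e^{-(1-2\varepsilon)V^2/(2\sigma^2)}$ for $\varepsilon$ small --- the first term of the lemma. If $V$ is larger, take $r=\lfloor\varepsilon V\log V/10\rfloor$; then $V\gtrsim\sigma^2\log V$ gives $\tfrac{2r\,v}{e(V-W)^2}\le\tfrac{(1+O(\varepsilon))\varepsilon\sigma^2\log V}{5eV}\le\tfrac{1+\varepsilon}{e}<1$, so the short--part count is $\ll X(\log\log X)^3 e^{-(1-O(\varepsilon))\varepsilon V\log V/10}$; since $V\ge\sqrt{\log\log X}$ forces $V\log V\gg\log\log\log X$, the factor $(\log\log X)^3$ is swallowed by the exponential, and as $\tfrac1{11}<\tfrac1{10}$ this lies inside $X e^{-\varepsilon V\log V/11}$. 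For the long part take $r=\lfloor\varepsilon V/10\rfloor$: then $\tfrac{2r\,v}{eW^2}\ll\tfrac{v}{\varepsilon V}\ll\tfrac{\log\log\log X}{\varepsilon V}\to0$, with logarithm $-(1-o(1))\log V$ (because $\log v=O_l(\log\log\log\log X)=o(\log V)$), so the long--part count is $\ll X(\log\log X)^3 e^{-(1-o(1))\varepsilon V\log V/10}$, absorbed into $X e^{-\varepsilon V\log V/11}$ exactly as above. Adding the two contributions yields the lemma.

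The step I expect to be the main obstacle is ensuring uniformity in $V$ across the whole window $[\sqrt{\log\log X},\,C\log X/\log\log X]$, and this is what forces the specific choices of $X_0$ and $W$: the short part must retain enough moment room --- about $\varepsilon V\log V$ moments, which is exactly what $X_0=X^{1/(\varepsilon V\log V)}$ delivers --- to recover both the Gaussian tail for moderate $V$ and a genuine $V\log V$--decay for large $V$, while $X_0$ cannot be taken too small, since the long part's variance is $l^2\log(\log X_1/\log X_0)$ and must stay $o(V/\log V)$, which is why $\log X_1/\log X_0$ is pinned at $\log V$; and in each regime the unavoidable factor $(\log\log X)^3$ coming from $L(1,\sym^2 h)\ll(\log\log X)^3$ in Lemma~\ref{lemma:sumusesummationformula} must be either dominated by the Gaussian term or absorbed into the $e^{-c\varepsilon V\log V}$ decay. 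The constant $\tfrac1{11}$ in the statement is merely a convenient admissible value from this bookkeeping --- any fixed constant $<\tfrac1{10}$ would serve --- and is not optimised.
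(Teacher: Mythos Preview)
Your proof is correct and follows the same Soundararajan two-range splitting as the paper, using Chebyshev together with Lemma~\ref{lemma:sumusesummationformula} for the moment bounds and Lemma~\ref{lemma:sumofcoefficients} for the variance computation. The only differences are cosmetic parameter choices: you split at $X_0=X^{1/(\varepsilon V\log V)}$ with threshold $W=\tfrac{\varepsilon}{3}V$ and take $r=\lfloor\varepsilon V\log V/10\rfloor$ in the large-$V$ regime, whereas the paper splits at $z=x^{1/\log\log(k_1+k_2)}$ with threshold $V_2=\varepsilon V$ and divides cases at $V\asymp\varepsilon\sigma^2\log\log(k_1+k_2)$ rather than your $V\asymp\varepsilon\sigma^2\log V$.
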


\begin{proof}
  Throughout the proof, we assume $\varepsilon > 0$ sufficiently small, and consider the range
  \[
  l\sqrt{\log\log (k_1 + k_2)} \leq V \leq Cl \frac{\log (k_1 + k_2)}{\log\log (k_1 + k_2)}.
  \]

  Following Soundararajan's optimization method, we choose the length of our Dirichlet polynomial as $x = (k_1 + k_2)^{\frac{1}{\varepsilon V}}$. We decompose $\mathcal{P}(h;x,x) = \mathcal{P}_1(h) + \mathcal{P}_2(h)$, where $\mathcal{P}_1(h) = \mathcal{P}(h;x,z)$ with $z = x^{\frac{1}{\log\log (k_1 + k_2)}}$. This choice ensures $\sum_{z \leq p \leq x} \frac{1}{p} \ll \log\log\log(k_1 + k_2)$.

  Let $V_1 = (1 - \varepsilon)V$ and $V_2 = \varepsilon V$. If $\mathcal{P}(h;x,x) > V$, then either
  \begin{equation}\label{eqn:case1}
    \mathcal{P}_1(h) > V_1,
  \end{equation}
  or
  \begin{equation}\label{eqn:case2}
    \mathcal{P}_2(h) > V_2.
  \end{equation}

  Using Lemma~\ref{lemma:sumusesummationformula} and \eqref{eqn:sumofsumoverp}, we find that for parameters satisfying $r \leq \frac{\varepsilon V}{10} \log\log (k_1 + k_2)$ and $z \ll (k_1 + k_2)^{\frac{1}{10r}}$, the number of $h \in H_{k_1 + k_2}$ satisfying \eqref{eqn:case1} is bounded by
  \begin{equation}
    \frac{1}{V_1^{2r}} \sum_{h \in H_{k_1 + k_2}} \mathcal{P}_1(h)^{2r}
    \ll \frac{(2r)!}{V_1^{2r} r! 2^r} (k_1 + k_2) (\log\log (k_1 + k_2))^3 \sigma(k_1 + k_2)^{2r}.
  \end{equation}

  We consider two cases for the parameter $r$:
  \begin{itemize}
    \item For $V \leq \frac{\varepsilon}{10} \sigma(k_1 + k_2)^2 \log\log (k_1 + k_2)$, we take $r = \lfloor \frac{V_1^2}{2\sigma(k_1 + k_2)^2} \rfloor$.
    \item For larger $V$, we set $r = \lfloor \frac{\varepsilon V}{10} \rfloor$.
  \end{itemize}
  This yields the estimate
  \begin{equation*}
    \# \{ h \in H_{k_1 + k_2} : \mathcal{P}_1(h) > V_1 \}
    \ll (k_1 + k_2) \left(e^{-(1 - 2\varepsilon)\frac{V^2}{2\sigma(k_1 + k_2)^2}} (\log\log (k_1 + k_2))^3 + e^{-\frac{\varepsilon}{11} V\log V}\right).
  \end{equation*}

  To bound the number of $h$ satisfying \eqref{eqn:case2}, we take $r = \lfloor \frac{\varepsilon V}{10} \rfloor$, noting that $x \ll (k_1 + k_2)^{\frac{1}{10r}}$. Applying Lemma~\ref{lemma:sumusesummationformula} and \eqref{eqn:sumofsumoverp} again gives
  \begin{multline}
    \frac{1}{V_2^{2r}} \sum_{h \in H_{k_1 + k_2}} \mathcal{P}_2(h)^{2r}
    \ll (k_1 + k_2) (\log\log (k_1 + k_2))^3 \frac{(2r)!}{r!} \\
    \times \left(\frac{C}{V_2^2} \log\log\log (k_1 + k_2)\right)^r
    \ll (k_1 + k_2) e^{-\frac{\varepsilon}{11}V\log V}.
  \end{multline}
  Combining these estimates completes the proof.
\end{proof}

\subsection{Proof of Proposition~\ref{prop:LboundSound}}

\begin{proof}
Note that \eqref{eqn:uppersymLSound} is a special case of \cite[Proposition 5.1]{Huang2024}, obtained by setting the exponent of one of the $\mathrm{GL}(3) \times \mathrm{GL}(2)$ $L$-functions to zero.
The independence on $l_1$ and $l_2$ can be obtained similarly from the following proof.
It remains to prove \eqref{eqn:uppertripleLSound}.

Using the relation \eqref{eqn:heckerelation} and bounding the contribution from terms with $n \geq 3$, we obtain the decomposition
\begin{multline}\label{eqn:Lambdasumsym2fuj}
    \sum_{p^n \leq x} \frac{\Lambda_{f \times g \times h}(p^n)}{n p^{n(\frac{1}{2}+\frac{1}{\log x})}} \frac{\log \frac{x}{p^n}}{\log x}
    = \sum_{p \leq x} \frac{\lambda_f(p) \lambda_g(p) \lambda_h(p)}{p^{\frac{1}{2}+\frac{1}{\log x}}} \frac{\log \frac{x}{p}}{\log x} \\
    + \frac{1}{2} \sum_{p \leq \sqrt{x}} \frac{(\lambda_{\sym^2 f}(p)-1)(\lambda_{\sym^2 g}(p)-1)(\lambda_{\sym^2 h}(p)-1)}{p^{1+\frac{2}{\log x}}} \frac{\log \frac{x}{p^2}}{\log x}
    + O(1).
\end{multline}

Applying Lemma \ref{lemma:sumofcoefficients} to the second sum in \eqref{eqn:Lambdasumsym2fuj} yields
\begin{equation}\label{eqn:secondterms2}
    -\frac{1}{2}\log\log x + O(\log \log \log (k_1+k_2)).
\end{equation}

Let us define the following key quantities:
\begin{equation}
    \mu(k_1+k_2) = \left(-\frac{1}{2}+\varepsilon\right) l \log\log (k_1+k_2),
\end{equation}
and the $L$-function moment
\begin{equation}
    \mathcal{L}(h) = L(1/2, f \times g \times h)^l,
\end{equation}
with the counting function
\begin{equation}
    \mathcal{B}(V) = \#\{h \in H_{k_1+k_2} : \log \mathcal{L}(h) >V\}.
\end{equation}

By integration by parts, we have the identity
\begin{equation}
    \sum_{h \in H_{k_1+k_2}} \mathcal{L}(h)
    = -\int_{\mathbb{R}} e^{V } \dd \mathcal{B}(V)
    = \int_{\mathbb{R}} e^{V } \mathcal{B}(V) \dd V
    = e^{\mu(k_1+k_2)} \int_{\mathbb{R}} e^{V } \mathcal{B}(V+\mu(k_1+k_2)) \dd V.
\end{equation}

Under the GRH, the Littlewood-type bound (see \cite[Corollary 1.1]{Chandee2009} or \cite[\S 4]{ChandeeSoundararajan2011}) gives
\begin{equation}
    \log \mathcal{L}(h) \leq C l \frac{\log(k_1+k_2)}{\log\log(k_1+k_2)}
\end{equation}
for some constant $C > 1$. Therefore, in the integral above, we may restrict to the range
\begin{equation}
    l\sqrt{\log \log (k_1+k_2)} \leq V \leq Cl\frac{\log (k_1+k_2)}{\log\log(k_1+k_2)},
\end{equation}
while for smaller $V$ we simply use the dimension estimate for $H_{k_1+k_2}$.

Setting $x = (k_1+k_2)^{\frac{1}{\varepsilon V}}$, we observe that for
\begin{equation}
    l\sqrt{\log \log (k_1+k_2)} \leq V \leq l(\log\log (k_1+k_2))^4,
\end{equation}
we have
\begin{equation*}
    -\frac{l}{2} \log\log x + O(l\log\log\log (k_1+k_2)) \leq \mu(k_1+k_2).
\end{equation*}

From Lemma \ref{lemma:LogLfunctions} and \eqref{eqn:secondterms2}, we deduce that
\begin{equation*}
    \mathcal{B}(V+\mu(k_1+k_2)) \leq \mathcal{A}(V(1-2\varepsilon);x)
\end{equation*}
when $l\sqrt{\log\log (k_1+k_2)} \leq V \leq l(\log\log (k_1+k_2))^4$. This inequality remains valid for $V \geq l(\log\log (k_1+k_2))^4$ since in this range $V+\mu(k_1+k_2) = V(1+o(1))$.

Combining these estimates with Lemma \ref{lemma:AXY}, we obtain for some absolute constant $C > 0$:
\begin{multline}
    \sum_{h \in H_{k_1+k_2}} \mathcal{L}(h)
    \ll (k_1+k_2) e^{\mu(k_1+k_2)} \\
    \times \int_{l\sqrt{\log\log (k_1+k_2)}}^{Cl\frac{\log (k_1+k_2)}{\log\log(k_1+k_2)}} e^V \left(e^{-\frac{(1-\varepsilon)V^2}{2\sigma(k_1+k_2)^2}} (\log\log (k_1+k_2))^3 + e^{-\varepsilon V\log V}\right) \dd V \\
    \ll l(k_1+k_2) \log ^\varepsilon (k_1+k_2) e^{\mu(k_1+k_2)+\frac{\sigma(k_1+k_2)^2}{2(1-\varepsilon)}} \\
    \ll l(k_1+k_2) \log^{\frac{l(l-1)}{2}+(l^2+1)\varepsilon} (k_1+k_2),
\end{multline}
where in the final step we employed the Gaussian integral identity
\begin{equation*}
    \int_{\mathbb{R}} e^{-\frac{x^2}{2\sigma^2}+x} \dd x = \sqrt{2\pi} \sigma e^{\frac{\sigma^2}{2}}.
\end{equation*}
This completes the proof.

\end{proof}

\subsection{Proof of Theorem~\ref{thm:Heckejoint}}\label{sec:quadjoint}

\begin{proof}
When $\{f_1,f_2\} = \{f_3,f_4\} \neq \{f_1\}$, this is Theorem 1.5 in Huang~\cite{Huang2024}.
When $f_1 = f_2 \neq f_3 = f_4$, this is Theorem 1.8 in Huang~\cite{Huang2024}.

For the remaining configurations ($\{f_1,f_2\} \neq \{f_3,f_4\}$, and $f_1 \neq f_2$ or $f_3 \neq f_4$),
we proceed as in the proof of \cite[Theorem 1.8]{Huang2024},
with \cite[Proposition 5.1]{Huang2024} replaced by the two cases below.

Let $f_3\neq f_4$, we have
\begin{equation*}
  \frac{1}{k_1}
  \sum_{h \in H_{2k_1}}
  L\left(\tfrac{1}{2}, h\right)^{\frac{1}{2}} L\left(\tfrac{1}{2}, \sym^2 f_1 \times h\right)^{\frac{1}{2}}
   L\left(\tfrac{1}{2}, f_3 \times f_4 \times h\right)^{\frac{1}{2}}
  \ll_{\varepsilon} \log ^{-\frac{3}{8}+2\varepsilon}k_1,
  \end{equation*}
  and if also $f_1\neq f_2$, we have
  \begin{equation*}
 \frac{1}{k_1+k_2}
  \sum_{h \in H_{k_1+k_2}}
    L\left(\tfrac{1}{2}, f_1 \times f_2 \times h\right)^{\frac{1}{2}}
   L\left(\tfrac{1}{2}, f_3 \times f_4 \times h\right)^{\frac{1}{2}}
  \ll_{\varepsilon} \log ^{-\frac{1}{4}+2\varepsilon}(k_1+k_2).
  \end{equation*}

These two results follow directly from the proof of \cite[Proposition 5.1]{Huang2024}, as the necessary properties of the triple product $L$-functions are already provided in the proof of Proposition~\ref{prop:LboundSound}.
The only additional condition required is $f_1 \times f_2 \nsim f_3 \times f_4$, which ensures that there is no
overlap contribution in the fractional moment.

When $f_1 = f_2$, regardless of whether $f_1$ has complex multiplication, the form $f_3 \times f_4$ corresponds to an automorphic cusp form on $\GL(4)$.

When $f_1 \neq f_2$, $f_3 \neq f_4$, and $\#\{f_1,f_2,f_3,f_4\} = 3$, we may assume without loss of generality
that $f_1 = f_3$ and hence $f_2 \neq f_4$. Under the assumption $f_1 \times f_2 \nsim f_3 \times f_4$,
strong multiplicity one would imply
\[
\sum_{n \leq X} \lambda_{f_1}(n)\lambda_{f_2}(n)
\overline{\lambda_{f_3}(n)\lambda_{f_4}(n)}
=\sum_{n \leq X} \prod_{i=1}^{4} \lambda_{f_i}(n) \gg X,
\]
but this sum corresponds to the Rankin--Selberg convolution $(f_1 \times f_1) \times (f_2 \times f_4)
= (\sym^2 f_1 \boxplus 1) \times (f_2 \times f_4)$, leading to a contradiction.

When $\#\{f_1,f_2,f_3,f_4\} = 4$, we need to assume $f_1 \times f_2 \nsim f_3 \times f_4$.
\end{proof}

\section*{Acknowledgements}

The authors sincerely thank Professor Bingrong Huang for the many valuable suggestions on this paper, including the application of the case $0<p<2$, which leads to coefficient distributions dominated by small-amplitude components, as well as numerous helpful comments on the writing.
The authors would like to thank Professor Philippe Michel for his constant encouragement.
The authors were supported by NSFC
(No. 12031008) and the National Key R\&D Program of China (No. 2021YFA1000700).
The author acknowledges the support for their visit from the Centre de recherches math\'ematiques, Universit\'e de Montr\'eal.

%



\begin{thebibliography}{10}


\bibitem{Bao2019}
Bao, Dianbin.
Polynomial identities between Hecke eigenforms.
\emph{Int. J. Number Theory} 15 (2019), no. 10, 2135--2150.


\bibitem{BeyerlJamesXue2014}
Beyerl, Jeffrey; James, Kevin; Xue, Hui. Divisibility of an eigenform by an eigenform.
\emph{Proc. Amer. Math. Soc.} 142 (2014), no. 1, 29--38.


\bibitem{BlomerBrumleyKhayutin2022}
Blomer, Valentin; Brumley, Farrell; Khayutin, Ilya.
The mixing conjecture under GRH.
(Preprint).
arXiv: 2212.06280.


\bibitem{BlomerKhanYoung2013}
Blomer, Valentin; Khan, Rizwanur; Young, Matthew.
Distribution of mass of holomorphic cusp forms.
\emph{Duke Math. J.} 162 (2013), no. 14, 2609--2644.


\bibitem{Chandee2009}
Chandee, Vorrapan.
Explicit upper bounds for $L$-functions on the critical line.
\emph{Proc. Amer. Math. Soc.} 137(2009), no. 12, 4049--4063.


\bibitem{ChandeeSoundararajan2011}
Chandee, Vorrapan; Soundararajan, Kannan.
Bounding $|\zeta(\frac{1}{2}+it)|$ on the Riemann hypothesis.
\emph{Bull. Lond. Math. Soc.} 43 (2011), no. 2, 243--250.


\bibitem{CCLS2025}
Chatzakos, Dimitrios; Cherubini, Giacomo; Lester, Stephen; Risager, Morten S.
The hyperbolic circle problem over Heegner points.
(Preprint).
arXiv: 2506.13883.


\bibitem{Duke1999}
Duke, William.
When is the product of two Hecke eigenforms an eigenform?
\emph{Number theory in progress, Vol. 2 (Zakopane-Ko\'scielisko, 1997),} 737--741, \emph{de Gruyter, Berlin,} 1999.


\bibitem{GelbartJacquet1978}
Gelbart, Stephen; Jacquet, Herv\'{e}.
A relation between automorphic representations of $\GL(2)$ and $\GL(3)$.
\emph{Ann. Sci. \'{E}cole Norm. Sup.} (4) 11 (1978), no. 4, 471--542.


\bibitem{Ghate2000}
Ghate, Eknath.
On monomial relations between Eisenstein series.
\emph{J. Ramanujan Math. Soc.} 15 (2000), no. 2, 71--79.


\bibitem{Harper2013}
Harper, Adam J.
Sharp conditional bounds for moments of the Riemann zeta function.
(Preprint).
arXiv: 1305.4618.


\bibitem{HuaHuangLi2024}
Hua, Shenghao; Huang, Bingrong; Li, Liangxun.
Joint value distribution of Hecke--Maass forms.
(Preprint).
arXiv: 2405.00996.


\bibitem{HuangLester2023}
Huang, Bingrong; Lester, Stephen.
Quantum variance for dihedral Maass forms. \emph{Trans. Amer. Math. Soc.} 376 (2023), no. 1, 643--695.


\bibitem{Huang2024}
Huang, Bingrong.
Joint distribution of Hecke eigenforms.
(Preprint).
arXiv: 2406.03073.


\bibitem{JoshiZhang2019}
Joshi, Kirti; Zhang, Yichao.
Eigenform product identities for Hilbert modular forms.
\emph{Math. Z.} 293 (2019), no. 3-4, 1161--1179.


\bibitem{Lapid2003}
Lapid, Erez M.
On the nonnegativity of Rankin-Selberg $L$-functions at the center of symmetry.
\emph{Int. Math. Res. Not.} 2003, no. 2, 65--75.


\bibitem{LauWu2006}
Lau, Yuk-Kam; Wu, Jie.
A density theorem on automorphic $L$-functions and some applications.
\emph{Trans. Amer. Math. Soc.} 358 (2006), no. 1, 441--472.


\bibitem{LesterRadziwill2020}
Lester, Stephen; Radziwi{\l \l}, Maksym. Quantum unique ergodicity for half-integral weight automorphic forms.
\emph{Duke Math. J.} 169 (2020), no. 2, 279--351.


\bibitem{MichelVenkatesh2006}
Michel, Philippe; Venkatesh, Akshay. Equidistribution, $L$-functions and ergodic theory: on some problems of Yu. Linnik. \emph{International Congress of Mathematicians. Vol. II,} 421--457, \emph{Eur. Math. Soc., Z\"{u}rich,} 2006.


\bibitem{Ramakrishnan2014}
Ramakrishnan, Dinakar.
An exercise concerning the selfdual cusp forms on $\GL(3)$.
\emph{Indian J. Pure Appl. Math.} 45 (2014), no. 5, 777--785.


\bibitem{RudnickSoundararajan2006}
Rudnick, Ze\'ev; Soundararajan, Kannan.
Lower bounds for moments of $L$-functions: symplectic and orthogonal examples.
\emph{Multiple Dirichlet series, automorphic forms, and analytic number theory,} 293--303, Proc. Sympos. Pure Math., 75, \emph{Amer. Math. Soc., Providence, RI,} 2006.


\bibitem{Soundararajan2009}
Soundararajan, Kannan.
Moments of the Riemann zeta function.
\emph{Ann. of Math. (2)} 190 (2) (2009), 981--993.


\bibitem{Titchmarsh1986}
Titchmarsh, Edward Charles.
The theory of the Riemann zeta-function.
\emph{The Clarendon Press, Oxford University Press, New York,} 1986.


\bibitem{Watson2008}
Watson, Thomas C.
Rankin Triple Products and Quantum Chaos.
(Preprint).
arXiv: 0810.0425.


\bibitem{Zenz2023}
Zenz, Peter.
Sharp bound for the fourth moment of holomorphic Hecke cusp forms.
\emph{Int. Math. Res. Not. IMRN} 2023, no. 16, 13562--13600.

\end{thebibliography}
\end{document}